\newtheorem{theorem}{Theorem}[section]
\newtheorem{proposition}[theorem]{Proposition}
\newtheorem{lemma}[theorem]{Lemma}
\theoremstyle{definition}
\newtheorem{definition}{Definition}
\newtheorem*{remark}{Remark}
\DeclareMathOperator{\dif}{d}
\newcommand{\pp}[2]{\frac{\partial{#1}}{\partial{#2}}}
\numberwithin{equation}{section}
\title{Affine Vector Fields on Finsler Manifolds}
\author{Libing Huang}
\address[Libing Huang]{School of Mathematical Sciences and LPMC, Nankai University, Tianjin 300071, P.~R.~China}
\email{huanglb@nankai.edu.cn}
\author{Qiong Xue}
\address[Qiong Xue]{School of Science, Wuhan University of Technology, Wuhan 430070, P.~R.~China}
\email{xueqion@iu.edu}
\begin{document}

\begin{abstract}
We give characterizations of affine transformations and affine vector fields in terms of the spray.   
By utilizing the Jacobi type equation that characterizes affine vector fields,  we prove some 
rigidity theorems of affine vector fields on compact or forward complete non-compact
Finsler manifolds with non-positive total Ricci curvature.
\end{abstract}

\maketitle

\section{Introduction}

It is well kown that every affine vector field on a compact orientable Riemannian manifold
is a Killing vector field \cite{Kobayashi}. In the noncompact case, Junichi Hano proved that if the
length of an affine vector field in a complete Riemannian manifold is bounded, then
its affine vector field is a Killing vector field \cite{Hano}. This result was generalized later by
Shinuke Yorozu, who proved that every affine vector field with finite norm on a complete
noncompact Riemannian manifold is a Killing vector field. Moreover, if the Riemannian
manifold has non-positive Ricci curvature, then every affine vector field with finite global
norm on it is a parallel vector field \cite{Yorozu}.

We generalize this result to Finsler manifolds.  We investigate affine vector fields
on Finsler manifolds and prove some rigidity theorems of affine vector fields on compact 
and forward complete non-compact Finsler manifolds with non-positive total Ricci
curvature.

\begin{theorem}\label{thm:main1}
Let $(M,F)$ be an $n$-dimensional compact Finsler manifold with non-positive 
total Ricci curvature. Then every affine vector field $V$ on $M$ is a linearly parallel 
vector field.
\end{theorem}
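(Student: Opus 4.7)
The plan is to transplant the classical Bochner integration argument to the Finsler setting by lifting $V$ to the unit tangent sphere bundle $SM$ and exploiting the Jacobi-type equation that characterizes affine vector fields.

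First I would use that, because $V$ is affine, its local flow preserves the spray, and hence the restriction of $V$ to any geodesic is a Jacobi field. Working with the complete lift $\hat V$ of $V$ to $TM$, this translates into a pointwise identity along the spray $\mathbf{G}$ of the schematic form
\begin{equation*}
  \mathbf{G}\bigl(\mathbf{G}(f)\bigr) \;=\; \bigl|\nabla V\bigr|_{y}^{2} - \mathrm{Ric}(y, V),
\end{equation*}
where $f = \tfrac{1}{2}\,g_y(V,V)$ and the curvature term arises from the Jacobi operator acting on $V$.

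Second, I would integrate this identity over the compact sphere bundle $SM$ against the canonical measure $d\mu$. Because the spray is divergence-free with respect to this measure, the left-hand side integrates to zero, leaving
\begin{equation*}
  \int_{SM} \bigl|\nabla V\bigr|_{y}^{2}\, d\mu \;=\; \int_{SM} \mathrm{Ric}(y,V)\, d\mu.
\end{equation*}
Performing the fiber integration over each $S_xM$ on the right-hand side converts that integral into a positive multiple of the total Ricci curvature of $V$ on $M$, which by hypothesis is non-positive. Combined with the non-negativity of the left-hand integrand, both integrals must vanish; hence $\nabla V \equiv 0$ on $SM$, which is precisely the condition that $V$ be linearly parallel on $M$.

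The main obstacle I anticipate is pinning down the Jacobi identity in the precise Finsler form required: several inequivalent connections (Chern, Berwald, Cartan) produce superficially similar formulas, and I must ensure that the curvature term which appears matches exactly the total Ricci curvature used in the hypothesis, with the correct sign and normalization. A secondary subtlety is that $\nabla V$ naturally lives over $SM$ rather than $M$, so the final passage from vanishing on $SM$ to \emph{linear} parallelism on $M$ must invoke the affine property of $V$ to rule out any residual vertical dependence in the resulting covariant derivative.
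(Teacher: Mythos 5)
Your proposal is correct and follows essentially the same route as the paper: the Jacobi-type equation $V_{|0|0}+R_y(V)=0$, integration over the compact sphere bundle $SM$ against the Liouville measure, and the vanishing of $\int_{SM}\xi(h)\,\dif\nu$ to kill the exact term (your $G(G(\tfrac12 g_y(V,V)))$ is just $G$ applied to the paper's integrand $g_y(V,V_{|0})$, so the two computations coincide). One small correction to your closing remark: passing from $\dot V\equiv 0$ on $SM$ to linear parallelism needs only the $y$-homogeneity of $V^i_{|0}$ (differentiate in $y$ to get $V^i_{|j}=0$), not any further use of the affine property.
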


\begin{theorem}\label{thm:main2}
Let $(M,F)$ be an $n$-dimensional forward complete non-compact Finsler
manifold with non-positive total Ricci curvature and bounded reversibility.
Then every affine vector field $V$ on $M$ with finite global norm is parallel. 
\end{theorem}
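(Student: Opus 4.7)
The plan is to adapt the Bochner/divergence identity behind the compact case (Theorem~\ref{thm:main1}) to the forward complete setting through a Yau-type cutoff argument, using bounded reversibility to control the asymmetry of $F$. Recall that for an affine vector field $V$, pairing the Jacobi-type equation set up earlier in the paper with $V$ itself yields a pointwise identity whose integrand is, up to a total divergence, a sum of the non-negative quantity $|\nabla V|^2$ and the total Ricci curvature $\mathrm{Ric}(V,V)$. On a compact manifold this is integrated globally and directly gives Theorem~\ref{thm:main1}; in the non-compact setting the divergence theorem is not available in one stroke, so a localisation is needed.

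Fix a base point $x_0 \in M$. For each $r > 0$, choose a Lipschitz cutoff $\phi_r\colon M \to [0,1]$ which equals $1$ on the forward geodesic ball $B^+(x_0,r)$, vanishes outside $B^+(x_0,2r)$, and whose differential has Finsler length at most $C/r$ almost everywhere. Multiply the pointwise identity by $\phi_r^{\,2}$, integrate against the chosen Finsler volume measure, and apply Cauchy--Schwarz to the resulting boundary-type term. After absorbing half of the $|\nabla V|^2$ contribution into the left-hand side, one should obtain an estimate of the form
\begin{equation*}
  \int_M \phi_r^{\,2} \, \bigl(|\nabla V|^2 + \mathrm{Ric}(V,V)\bigr) \, \dif\mu \;\le\; \frac{C'}{r^2}\int_{B^+(x_0,2r)} F(V)^2 \, \dif\mu.
\end{equation*}
Bounded reversibility enters precisely at the Cauchy--Schwarz step: it replaces every occurrence of $F(-W)$ by a bounded multiple of $F(W)$, so that the right-hand side really is dominated by the global norm of $V$ rather than an asymmetric substitute.

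Letting $r \to \infty$, the right-hand side tends to $0$ by the finite global norm hypothesis, while non-positivity of the total Ricci curvature together with $|\nabla V|^2 \ge 0$ forces both integrands to vanish. Hence $\nabla V \equiv 0$ and $V$ is parallel. I expect the cutoff estimate to be the main obstacle, because in the Finsler setting the covariant derivative, the fibre norm on $\nabla V$, and the divergence are all non-linear objects depending on a reference direction (typically $V$ itself). One must verify that the Cauchy--Schwarz error term really is bounded by a multiple of $F(V)^2$ on the annulus $B^+(x_0,2r) \setminus B^+(x_0,r)$, rather than by an expression involving derivatives of $V$; this is where bounded reversibility becomes essential. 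Once the localised estimate is in place with the right-hand side depending only on $\int F(V)^2$ on the annulus, the rest is a direct Finsler transcription of Yorozu's argument.
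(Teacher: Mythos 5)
Your overall strategy --- a Yorozu/Yau-type cutoff supported on forward balls of radius comparable to $\alpha$ with derivative $O(1/\alpha)$, a quadratic absorption of the cross term, and the limit $\alpha\to\infty$ using the finite global norm --- is exactly the one the paper uses. However, the step you defer as ``the main obstacle'' is the one carrying the actual Finslerian content, and as written your plan does not close. You propose to multiply a pointwise Bochner identity on $M$ by $\phi_r^2$, integrate against ``the chosen Finsler volume measure'' on $M$, and invoke the divergence theorem. There is no such identity on $M$: the Jacobi-type equation $V_{|0|0}+R_y(V)=0$ of Proposition \ref{prp:charaff} and every quantity entering your integrand ($\dot V$, $g_y$, $R_y$) live on $TM_0$ and depend on the reference direction $y$, so $|\nabla V|^2$ and $\mathrm{Ric}(V,V)$ are not functions on $M$ and the ``total divergence'' you need is not a divergence on $M$ at all. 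The paper resolves this by moving the whole argument to the sphere bundle: one forms the $0$-homogeneous function $f=\mu^2 g_y(V,\dot V)$ on $SM$ and applies Lemma \ref{lem:key}, which gives $\int_{SM}\dot f\,\dif\nu=0$ because $\dot f\,\omega\wedge(\dif\omega)^{n-1}=\dif\bigl(f\,(\dif\omega)^{n-1}\bigr)$ is exact. That single lemma replaces the divergence theorem and dissolves the direction-dependence you flag; without it, or an equivalent substitute, the localised estimate you display has no derivation.

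You also locate the role of bounded reversibility in the wrong place. It is not needed at the Cauchy--Schwarz/absorption step, which is the pure positivity statement $g_y(\mu\dot V+2\dot\mu V,\;\mu\dot V+2\dot\mu V)\ge 0$ and involves no norms of $-W$. It is needed to prove that the cutoff has small derivative in the first place: with $\rho(x)=d(p,x)$ one gets $\dot\rho(x,y)\le F(x,y)=1$ on $SM$ for free, but the lower bound is only $\dot\rho(x,y)\ge -F(x,-y)\ge -\lambda(F)$, because the forward distance is asymmetric. Bounded reversibility is exactly what yields the uniform bound $\dot\mu^2\le A\,\lambda(F)^2/\alpha^2$ on all of $SM$, i.e.\ it justifies the $C/r$ gradient bound that you asserted for $\phi_r$ without proof. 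Once these two points are repaired --- work on $SM$ with Lemma \ref{lem:key}, and use reversibility to control $\dot\rho$ from below --- your outline coincides with the paper's proof.
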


The proofs of the above theorems follow a typical Finslerian style.  We extensively use
knowledge of the tangent bundle and sphere bundle, which are by no means neccessary in Riemannian
geometry.  Moreover, since the sphere bundle is always orientable, we can drop the orientable 
condition of the manifold that is used in Riemannian case.

In Riemannian geometry, affine vector field $V$ is characterized by the fact that the Lie
derivative of the Riemannian metric $g$ is paralle, namely,
\[
	\nabla(\mathcal{L}_Vg) = 0,
\]
where $\nabla$ is the Levi-Civita connection \cite{Poor}.  In Finsler geometry, there are many
choices of connections but the characterization of an affine vector field is much more concise
and without any connection.  We show that a vector field $V$ is affine if and only if its flow
commutes with the geodesic flow.  Equivalently, $\hat{V}$, the complete lift of $V$, commutes
with the Finsler spray $G$. i.e.,
\[
	\mathcal{L}_{\hat{V}}G = 0.
\]
By expanding the above Lie bracket, we obtain a Jacobi type equation, thus Bochner type techniques
could be applied.  The crucial difference between the Riamannian case and Finsler case is that,
in Riemannian geometry one needs to compute the Laplacian of the energy of $V$, but in Finsler 
geometry we don't need further computation.

The organization of the paper is as follows.  Firstly, we review some basic facts of Finsler geometry
and give a definition of the total Ricci curvature in Section \ref{sec:pre}.  Then we define 
affine transformations and affine vector fields on Finsler manifolds respectively in Section \ref{sec:aff}.
Several characterizations of affine vector fields are provided,  which are useful to the theorems.  
Finally, the proofs of the above theorems are presented in Section \ref{sec:prf}.

\section{Preliminaries}
\label{sec:pre}

In this section, we give a brief description of some basic materials that are needed to prove
Theorem \ref{thm:main1}.

\subsection{Spray and Riemann curvature tensor}

Let $M$ be an $n$-dimensional smooth manifold.  A smooth function $F$ on the 
punctured tangent bundle $TM_0:= TM\backslash\{0\}$ is called a \emph{Finsler metric}, 
if the restriction $F|_{T_xM\backslash\{0\}}$ is a Minkowski norm for every $x$ in $M$.

The natural projection $\pi:TM_0\to M$ induces a pull back bundle $\pi^*TM$ over $TM_0$,
whose fiber at each point $y\in TM_0$ is just a copy of $T_xM$, where $x=\pi(y)$.  
For each fixed $y\in TM_0$, one can define an inner product $g_y$ on the fibre $T_xM$ as follows
\[
	g_y = g_{ij} \dif x^i\otimes\dif x^j,\quad  g_{ij} = \frac{1}{2}[F^2]_{y^iy^j},
\]
where we have used the natural local coordinate system $(x^i, y^i)$ on $TM_0$.  When $y$ varies,
the inner products $g_y$ become a globally defined tensor field on $\pi^*TM$, called 
the \emph{fundamental tensor} \cite{ZShen}.

The following set of functions are defined locally and are called the \emph{spray coefficients}
\[
	G^i = \frac{1}{4}g^{il}\big([g_{jl}]_{x^k} + [g_{lk}]_{x^j} - [g_{jk}]_{x^l}\big)y^j y^k.
\]
It can be verified that the vector field $G = y^i\pp{}{x^i} - 2G^i\pp{}{y^i}$ is globally defined and it
is called the \emph{spray} induced by $F$.  A curve $\gamma:(a,b)\to M$ is called a \emph{geodesic},
if the curve $\dot\gamma:(a,b)\to TM$ is an integral curve of $G$.  Locally, the coordinates 
$(\gamma^i(t))$ of  a geodesic $\gamma(t)$ satisfy
\begin{equation}
	\ddot{\gamma}^i(t) + 2G^i(\gamma(t),\dot{\gamma}(t)) = 0.
\end{equation}
Hence, for any $y\in TM_0$, there is a unique geodesic $\gamma(t)$ defined on a maximal open interval 
containing $0$, and it satisfies $\dot\gamma(0)=y$.

The local functions $N^i_j = [G^i]_{y^j}$ are called the (nonlinear) \emph{connection coefficients}.  Let
\[
	\frac{\delta}{\delta x^i} = \pp{}{x^i} - N_i^j\pp{}{y^j},
\]
then $HTM = \mathrm{span}\big\{\frac{\delta}{\delta x^i}\big\}$ and 
$VTM = \mathrm{span}\big\{\pp{}{y^i}\big\}$
are well-defined subbundles of $TTM_0$ and $TTM_0 = HTM\oplus VTM$.

Direct computation yields
\begin{equation}\label{eq:Gv}
	\Big[G, \pp{}{y^i}\Big] = -\frac{\delta}{\delta x^i} + N_i^j \pp{}{y^j},
\end{equation}
and
\begin{equation}\label{eq:Gh}
	\Big[G, \frac{\delta}{\delta x^i}\Big] = R^j{}_i \pp{}{y^j} + N^j_i \frac{\delta}{\delta x^j},
\end{equation}
where the functions $R^j{}_i$ are given by
\[
	R^j{}_i = 2[G^j]_{x^i} - G(N^j_i) - N_k^j N^k_i.
\]
For each fixed $y\in TM_0$, the $(1,1)$ tensor $R_y = R^j{}_i \pp{}{x^j}\otimes \dif x^i$ is called
the \emph{Riemann curvature tensor}.

\subsection{Sphere bundle and volume form}

The set $SM = \{y\in TM_0\,|\, F(y)=1\}$ is called the \emph{unit sphere bundle} or \emph{indicatrix bundle}.  
Let $\omega = F_{y^i}\dif x^i = g_{ij} y^j/F \dif x^i$,  then $\omega$ is a globally defined contact form on $SM$
and it is called the \emph{Hilbert form}.   By straightforward computation, one can show that the vector field
$\xi = G/F$ satisfies
\begin{equation}\label{eq:Reeb}
	\omega(\xi) = 1,\quad \dif\omega(\xi,\;\cdot\;) = 0.
\end{equation}
Henceforth,  $\xi$ is called the \emph{Reeb field} according to the contact terminology.

Although the manifold $M$ could be non-oriented,  the sphere bundle $SM$ is always oriented because 
it carries the following volume form
\[
	\dif\nu = c_n\;\omega\wedge(\dif\omega)^{n-1},
\]
where the constant $c_n = (-1)^{-1+n(n+1)/2}/(n-1)!$.

The \emph{mean Ricci curvature} $\widetilde{Ricci}$ is defined in \cite{BShen} with the help of an auxiliary 
Riemannian metric.  For our purposes,  we define the \emph{total Ricci curvature} $\mathcal{T}(V)$ as follows
\[
	\mathcal{T}(V) = \int_{SM} \frac{1}{F^2}g_y(R_y(V), V)\;\dif\nu.
\]
where $V$ is any vector field on $M$.  When $M$ is oriented,  $\mathcal{T}(V)$ is the integration 
of $\widetilde{Ricci}(V)$ over $M$, where $M$ has been assigned the volume form of the auxiliary 
Riemannian metric.

\subsection{Dynamical derivative}

Berwald connection is a linear connection on the vector bundle $\pi^*TM$ over $TM_0$,  whose connection
coefficients are given by $\Gamma^i_{jk} = [G^i]_{y^jy^k}$.  Using Berwald connection,  one can take
covariant derivatives of any tensor fields on $TM_0$.  For example,  if $T=T^i_j \pp{}{x^i}\otimes\dif x^j$,
then the horizontal covariant derivative is given by
\[
	T^i_{j|k} = \frac{\delta T^i_j}{\delta x^k} + T^l_j\Gamma^i_{lk} - T^i_l\Gamma^l_{jk}.
\]
The \emph{dynamical derivative} is just the horizontal covariant derivative along the direction 
$G=y^k\frac{\delta}{\delta x^k}$.  For example,  the dynamical derivative of the above tensor field $T$ is given by 
$T_{|0} = T^i_{j|0}\pp{}{x^i}\otimes\dif x^j$, where
\[
	T^i_{j|0} = G(T^i_j) + T^l_j N^i_l - T^i_l N^l_j.
\]
It should be remarked that, although we introduce the dynamical derivative using Berwald connection,  
this special derivative is actually independent of any named connection.   One may consult \cite{Foulon}
for another treatment of this concept.

For every smooth function $f$ on $TM_0$,  the dynamical derivative of $f$ is given by 
$f_{|0} = G(f)$.  In other words, $f_{|0}$ is just the derivative of $f$ along geodesics, i.e., 
\[
	f_{|0}(x,y) = \left.\frac{\dif}{\dif t} f(\gamma(t),\dot\gamma(t))\right|_{t=0},  \quad\forall (x,y)\in TM_0,
\]
where $\gamma$ is the unique geodesic with $\gamma(0)=x$, $\dot\gamma(0)=y$.

As a concrete example,  the dynamical derivative of the Finsler function $F$ is zero.
Another example is the fundamental tensor;  its dynamical derivative is also zero, because
\begin{equation}
	g_{ij|0} = G(g_{ij}) - g_{kj}N^k_i - g_{ik}N^k_j = 0.
\end{equation}

Sometimes,  the \emph{dynamical derivative} is also defined to be horizontal covariant derivative 
along the direction $\xi = G/F$.   When using this convention, we shall denote the dynamical
derivative of a tensor field $T$ by $\dot{T}$.   For example,  if $V=V^i\pp{}{x^i}$, then
\[
	\dot{V} = \frac{V^i_{|0}}{F}\pp{}{x^i}.
\]

Since every vector field $V$ on $M$ can be thought of as a smooth section of $\pi^*TM$,  the 
symbol $V_{|0}$ or $\dot{V}$ makes sense.   In this case $\dot{V}=0$ if and only
if $V_{|0}=0$, if and only if $V_{|i}= 0$, i.e., $V$ is a linearly parallel vector field \cite{ChernShen}.

\section{Affine transformation and affine vector field}
\label{sec:aff}

\subsection{Affine transformation}

In affine differential geometry,  affine transformation is a special kind of projective transformation, 
which preserves the (parametrized) geodesics on a manifold with an affine connection.  In Euclidean space, 
affine transformation consists of translation, scaling, homothety, similarity transformation, reflection, 
rotation, shear mapping, and compositions of the above in any combination or sequence.  This concept
can also be studied in the realm of spray geometry.

\begin{definition}
Let $M$ be an $n$ dimensional manifold with a spray $G$.  A diffeomorphism $\phi:M\to M$ is called 
\emph{affine transformation} if for any geodesic $\gamma:(a,b)\to M$,  the curve $\phi\circ\gamma$
is also a geodesic.
\end{definition}

Since the geodesics are defined by the spray, we can prove affine transformation preserves the spray.

\begin{lemma}\label{lem:affinetrans}
A diffeomorphism $\phi$ is an affine transformation iff 
\begin{equation}
	\hat{\phi}_*G = G, 
\end{equation}
where $\hat{\phi}: TM\to TM$ is the lift of $\phi$ defined by
\[
	\hat{\phi}(x,y) = (\phi(x),\phi_*(y)),\quad \forall x\in M, \quad y\in T_xM.
\]
\end{lemma}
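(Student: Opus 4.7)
The plan is to translate the geodesic-preserving property directly into the language of integral curves of $G$, and avoid any coordinate computation with the spray coefficients $G^i$.

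The key observation is that by the definition of the spray, a curve $\gamma:(a,b)\to M$ is a geodesic if and only if its velocity $\dot\gamma:(a,b)\to TM_0$ is an integral curve of $G$. In parallel, for an arbitrary smooth curve $\gamma$ on $M$, the chain rule gives $(\phi\circ\gamma)\dot{\,}(t) = \phi_*\dot\gamma(t) = \hat\phi(\dot\gamma(t))$, so that the velocity of the image curve $\phi\circ\gamma$ is precisely $\hat\phi\circ\dot\gamma$. These two remarks together reduce the whole lemma to the standard fact that a diffeomorphism $\hat\phi$ pushes the integral curves of $G$ to themselves if and only if $\hat\phi_*G=G$.

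For the forward direction, I would fix an arbitrary $y\in TM_0$ and let $\gamma$ be the unique geodesic with $\dot\gamma(0)=y$, so that $\dot\gamma$ is the integral curve of $G$ issuing from $y$. The affine assumption says $\phi\circ\gamma$ is again a geodesic, whose velocity $\hat\phi\circ\dot\gamma$ is therefore an integral curve of $G$ starting at $\hat\phi(y)$. On the other hand, by the definition of push-forward, $\hat\phi\circ\dot\gamma$ is the integral curve of $\hat\phi_*G$ starting at $\hat\phi(y)$. Uniqueness of integral curves then forces $\hat\phi_*G$ and $G$ to agree at $\hat\phi(y)$, and since $\hat\phi$ is a diffeomorphism of $TM_0$, we get $\hat\phi_*G=G$ everywhere. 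For the reverse direction, if $\hat\phi_*G=G$ and $\gamma$ is any geodesic, then $\dot\gamma$ is an integral curve of $G$, hence $\hat\phi\circ\dot\gamma = (\phi\circ\gamma)\dot{\,}$ is an integral curve of $\hat\phi_*G=G$, and so $\phi\circ\gamma$ is a geodesic.

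There is really no hard step; the only point requiring a little care is the conceptual bookkeeping in identifying $\hat\phi\circ\dot\gamma$ with $(\phi\circ\gamma)\dot{\,}$, i.e.\ recognizing that the map $\hat\phi(x,y)=(\phi(x),\phi_*y)$ is precisely the lift that turns velocities of curves into velocities of image curves. One should also note that $G$ extends continuously across the zero section (with value zero, by homogeneity of $G^i$) and $\hat\phi$ preserves the zero section, so the identity $\hat\phi_*G=G$ on $TM_0$ automatically holds on all of $TM$.
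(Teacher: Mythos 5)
Your proposal is correct and follows essentially the same route as the paper: identify $(\phi\circ\gamma)\dot{\,}$ with $\hat\phi\circ\dot\gamma$, then use uniqueness of integral curves of $G$ to pass back and forth between ``$\phi$ preserves geodesics'' and $\hat\phi_*G=G$. The extra remark about extending across the zero section is a harmless refinement that the paper does not need, since it works on $TM_0$ throughout.
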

\begin{proof}
For any $(x,y)\in TM_0$, let $\gamma$ be the unique geodesic with $\gamma(0)=x$, $\dot\gamma(0) = y$.
Let $\sigma = \dot\gamma$, then $\sigma$ is an integral curve of $G$, namely, $G_{\sigma} = \dot\sigma$.

If $\phi$ is an affine transformation, then $\phi\circ\gamma$ is also a geodesic.  In this case, 
$(\phi\circ\gamma)' = \hat\phi\circ\sigma$ is also an integral curve of $G$, i.e., 
\[
	G_{\hat{\phi}\circ\sigma} = (\hat{\phi}\circ\sigma)' = \hat{\phi}_*\dot\sigma = \hat{\phi}_* G_{\sigma}.
\]
Taking $t=0$ in the above identity, then we have $\hat{\phi}_*G_{(x,y)} = G_{\hat{\phi}(x,y)}$.

Conversely, if $\hat{\phi}_*G = G$, then $\hat{\phi}$ maps every integral curve of $G$ to an integral curve.
Consequently it will map every geodesic to a geodesic.  As a result,  $\phi$ is affine.
\end{proof}

\subsection{Affine vector field}

Let $V$ be a vector field on $M$ which generates a local one-parameter group $\phi_t$, 
$t\in(-\varepsilon,\varepsilon)$. Let $\hat{\phi}_t$ be the lift of $\phi_t$,  then $\hat{\phi}_t$ is 
also a local one-parameter group on $TM$.
So there is a vector field $\hat{V}$ on $TM$ induced by $\hat{\phi}_t$;  it is called the \emph{complete lift} of $V$.
Locally,  if $V=V^i\pp{}{x^i}$,  then we have $\hat{V} = V^i\pp{}{x^i} + y^j\pp{V^i}{x^j}\pp{}{y^i}$.

\begin{definition}
On a spray manifold $(M,G)$,  a vector field $V$ is called affine vector field, if the local one-parameter group $\phi_t$
generated by $V$ consists of affine transformations.
\end{definition}

Based on the property of affine transformation, we can deduce the following characterizations of affine vector fields.
\begin{proposition}\label{prp:charaff}
Let $V$ be a vector field on a spray manifold $(M,G)$.   Then the following assertions are mutually equivalent.
\begin{itemize}
\item[\textup{(1)}] $V$ is an affine vector field;
\item[\textup{(2)}] $\mathcal{L}_{\hat{V}}G = 0$;
\item[\textup{(3)}] $V^i_{|0|0} + V^k R^i{}_k = 0$.
\end{itemize}
\end{proposition}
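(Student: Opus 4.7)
\medskip
\noindent\textbf{Proof plan.}

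The plan is to establish (1)$\Leftrightarrow$(2) by a direct flow argument that invokes Lemma~\ref{lem:affinetrans}, and then establish (2)$\Leftrightarrow$(3) by a coordinate computation of the bracket $[G,\hat V]$ in the horizontal-vertical frame.

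For (1)$\Leftrightarrow$(2), I would simply apply Lemma~\ref{lem:affinetrans} pointwise in $t$ to the local one-parameter group $\phi_t$ generated by $V$. By definition $V$ is affine iff every $\phi_t$ is an affine transformation, and by that lemma this is equivalent to $(\hat\phi_t)_*G=G$ for all $t$. Since the complete lift $\hat V$ is precisely the infinitesimal generator of the lifted flow $\hat\phi_t$, the one-parameter family of identities $(\hat\phi_t)_*G=G$ is equivalent to the infinitesimal condition $\mathcal L_{\hat V}G=[\hat V,G]=0$.

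For (2)$\Leftrightarrow$(3), the first step is to rewrite $\hat V = V^i\pp{}{x^i}+y^jV^i_{x^j}\pp{}{y^i}$ in the adapted frame $\{\delta/\delta x^i,\partial/\partial y^i\}$. Using $\partial/\partial x^i=\delta/\delta x^i+N^j_i\partial/\partial y^j$ and the fact that for a function $V^i=V^i(x)$ on $M$ one has $V^i_{|0}=G(V^i)+V^kN^i_k=y^jV^i_{x^j}+V^kN^i_k$, a short calculation gives the clean expression
\[
	\hat V = V^i\,\frac{\delta}{\delta x^i} + V^i_{|0}\,\pp{}{y^i}.
\]
Similarly, by Euler homogeneity $y^iN^j_i=2G^j$, the spray becomes $G=y^i\,\delta/\delta x^i$. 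Now I would expand
\[
	[G,\hat V] = G(V^i)\frac{\delta}{\delta x^i} + V^i\Big[G,\frac{\delta}{\delta x^i}\Big] + G(V^i_{|0})\pp{}{y^i} + V^i_{|0}\Big[G,\pp{}{y^i}\Big],
\]
plug in the commutation relations \eqref{eq:Gv} and \eqref{eq:Gh}, and collect horizontal and vertical parts. The horizontal part comes out to $(G(V^i)+V^kN^i_k-V^i_{|0})\,\delta/\delta x^i$, which vanishes by the formula for $V^i_{|0}$ above. The vertical part becomes $(V^k R^i{}_k+G(V^i_{|0})+V^k_{|0}N^i_k)\,\partial/\partial y^i$, and recognising $V^i_{|0|0}=G(V^i_{|0})+V^k_{|0}N^i_k$ as the dynamical derivative of the tensor $V^i_{|0}\,\partial/\partial x^i$, this is exactly $(V^i_{|0|0}+V^kR^i{}_k)\,\partial/\partial y^i$. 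Since $\partial/\partial y^i$ is a frame, the vanishing of $[G,\hat V]$ is equivalent to $V^i_{|0|0}+V^kR^i{}_k=0$.

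The only step where one has to be a bit careful is the reformulation of $\hat V$ in the adapted frame and the identification of the second dynamical derivative $V^i_{|0|0}$; everything else is mechanical bookkeeping using \eqref{eq:Gv} and \eqref{eq:Gh}. I expect the horizontal cancellation to be the small pleasant surprise in the computation, and once that is in hand the vertical part lines up exactly with the Jacobi-type expression in (3).
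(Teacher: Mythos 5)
Your proposal is correct and follows essentially the same route as the paper: (1)$\Leftrightarrow$(2) via Lemma~\ref{lem:affinetrans} applied to the flow $\phi_t$ and differentiation at $t=0$, then (2)$\Leftrightarrow$(3) by rewriting $\hat V = V^i\,\delta/\delta x^i + V^i_{|0}\,\partial/\partial y^i$ and expanding the bracket with \eqref{eq:Gv} and \eqref{eq:Gh}. The horizontal cancellation and the identification of the vertical part with $V^i_{|0|0}+V^kR^i{}_k$ are exactly as in the paper's computation.
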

\begin{proof}
By definition, $V$ is an affine vector field, if and only if the corresponding one-parameter group $\phi_t$ consists of
affine transformations.   This is also equivalent to $\hat{\phi}_{t*}G = G$ by Lemma \ref{lem:affinetrans}.  Taking derivative
with respect to $t$ at $t=0$,  we have $\mathcal{L}_{\hat{V}}G = 0$.  So the implication (1)$\Longrightarrow$(2) is proved.
The converse is clear by the definition of Lie derivative (or one may consult \cite[Prop. 1.7]{KN}).

Now we prove the equivalence of (2) and (3) by some local computation.  First, note that
\begin{align*}
	\hat{V} &= V^i\pp{}{x^i} + y^j\pp{V^i}{x^j}\pp{}{y^i} = V^i\big(\frac{\delta}{\delta x^i}+N_i^j\pp{}{y^j}\big)
		+ y^j\pp{V^i}{x^j}\pp{}{y^i}\\
	&=V^i\frac{\delta}{\delta x^i} + \big(y^j\frac{\delta V^i}{\delta x^j} + V^k N_k^i\big)\pp{}{y^i}
	= V^i\frac{\delta}{\delta x^i} + V^i_{|0}\pp{}{y^i}.
\end{align*}
Using the above equation,  we deduce
\begin{align*}
	[G, \hat{V}] &= \big[G, V^i\frac{\delta}{\delta x^i} + V^i_{|0} \pp{}{y^i} \big] \\
	&= G(V^i)\frac{\delta}{\delta x^i} + G(V^i_{|0})\pp{}{y^i} 
		+ V^i\cdot \big[G, \frac{\delta}{\delta x^i}\big] + V^i_{|0}\cdot\big[G, \pp{}{y^i}\big]\\
	&= G(V^i)\frac{\delta}{\delta x^i} + G(V^i_{|0})\pp{}{y^i} + V^i\big(R^j{}_i\pp{}{y^j}
		+ N^j_i\frac{\delta}{\delta x^j}\big)\\
	&\qquad	+V^i_{|0}\big(-\frac{\delta}{\delta x^i} + N^j_i \pp{}{y^j}\big)\\
	&= \big(G(V^i_{|0}) + V^k_{|0}N^i_k + V^kR^i{}_k\big)\pp{}{y^i} \\
	&= (V^i_{|0|0} + V^kR^i{}_k)\pp{}{y^i},
\end{align*}
where we have used (\ref{eq:Gh}) and (\ref{eq:Gv}).  It is clear that $\mathcal{L}_{\hat{V}}G = 0$ if and only 
if $V^i_{|0|0}+V^k R^i{}_k = 0$.  Thus the proposition is proved.
\end{proof}
\begin{remark}
The third assertion can be written as 
\[
	V_{|0|0} + R_y(V) = 0, \quad\text{or}\quad \ddot{V} + R_y(V)/F^2 = 0.
\]
This equation is the same as Jacobi equation when restricted to a geodesic.  So one can conclude that
$V$ is an affine vector field, if and only if the restriction of $V$ to any geodesic is a Jacobi field.
\end{remark}

\section{Affine vector fields on Finsler manifolds}
\label{sec:prf}

The main purpose of this section is to prove Theorems \ref{thm:main1} and \ref{thm:main2}.   To that end, 
we need a technical lemma.

\begin{lemma}\label{lem:key}
Let $(M,F)$ be a Finsler manifold without boundary.  Let $\dif\nu=c_n\,\omega\wedge(\dif\omega)^{n-1}$ 
be the volume form of $SM$.  Then for any compactly supported smooth function $f$ on $SM$, we have
\[
	\int_{SM} \dot{f}\; \dif\nu = 0.
\]
\end{lemma}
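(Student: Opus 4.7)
The plan is to recognize $\dot f$ as the Lie derivative of $f$ along the Reeb field $\xi=G/F$ on $SM$, and then show that the volume form $\dif\nu$ is invariant under the Reeb flow. Once this is established, $\dot f\,\dif\nu$ becomes an exact $(2n-1)$-form, and Stokes' theorem closes the argument.

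More concretely, I would first recall from the definition of the dynamical derivative that for a function $\dot f = G(f)/F = \xi(f) = \mathcal{L}_\xi f$. Next, I would show that $\mathcal{L}_\xi \dif\nu = 0$. To do this, I would apply Cartan's magic formula to the contact form $\omega$: since $\iota_\xi\omega = 1$ and $\iota_\xi\dif\omega = 0$ by the defining properties \eqref{eq:Reeb} of the Reeb field,
\[
	\mathcal{L}_\xi \omega = \dif(\iota_\xi\omega) + \iota_\xi\dif\omega = 0,
\]
and consequently $\mathcal{L}_\xi\dif\omega = \dif(\mathcal{L}_\xi\omega) = 0$. A Leibniz expansion of $\mathcal{L}_\xi\dif\nu = c_n\mathcal{L}_\xi(\omega\wedge(\dif\omega)^{n-1})$ then vanishes term by term.

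With $\mathcal{L}_\xi\dif\nu = 0$ in hand, I would compute
\[
	\dot f\,\dif\nu = \mathcal{L}_\xi(f\,\dif\nu) = \dif\bigl(\iota_\xi(f\,\dif\nu)\bigr) + \iota_\xi\bigl(\dif(f\,\dif\nu)\bigr),
\]
and note that $\dif(f\,\dif\nu) = 0$ for dimensional reasons, since $f\,\dif\nu$ is already a top form on the $(2n-1)$-dimensional manifold $SM$. Therefore $\dot f\,\dif\nu = \dif\beta$ where $\beta = \iota_\xi(f\,\dif\nu) = f\cdot\iota_\xi\dif\nu$ is a compactly supported $(2n-2)$-form on $SM$ (its support lies inside $\mathrm{supp}(f)$). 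Stokes' theorem on the boundaryless manifold $SM$ then gives $\int_{SM}\dot f\,\dif\nu = \int_{SM}\dif\beta = 0$.

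I do not anticipate any genuine obstacle here; the only thing one has to be careful about is that $SM$ is non-compact when $M$ is non-compact, which is precisely why the compact-support hypothesis on $f$ is needed to justify Stokes. The core content of the lemma is the contact-geometric fact that the Reeb flow preserves the canonical volume $\omega\wedge(\dif\omega)^{n-1}$, which is an immediate consequence of $\iota_\xi\omega = 1$ and $\iota_\xi\dif\omega = 0$.
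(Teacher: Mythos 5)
Your proof is correct. It reaches the same exact form as the paper --- indeed, since $\iota_\xi\dif\omega=0$ and $\iota_\xi\omega=1$ give $\iota_\xi\bigl(\omega\wedge(\dif\omega)^{n-1}\bigr)=(\dif\omega)^{n-1}$, your primitive $\beta=f\,\iota_\xi\dif\nu=c_n f\,(\dif\omega)^{n-1}$ is exactly the primitive the paper exhibits --- but you arrive at it by a genuinely different route. The paper chooses a Darboux-adapted local coframe in which $\dif\omega=\sum_\alpha\omega^\alpha\wedge\omega^{n+\alpha}$, observes that wedging $\dif f$ with $(\dif\omega)^{n-1}$ kills every component of $\dif f$ except the $\omega^n=\omega$ one (whose coefficient is $\xi(f)=\dot f$), and so gets $\dot f\,\omega\wedge(\dif\omega)^{n-1}=\dif f\wedge(\dif\omega)^{n-1}=\dif\bigl(f(\dif\omega)^{n-1}\bigr)$ by a pointwise frame computation. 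You instead isolate the invariant statement that the Reeb flow preserves the contact volume, $\mathcal{L}_\xi\dif\nu=0$, via Cartan's formula, and then let $\dot f\,\dif\nu=\mathcal{L}_\xi(f\,\dif\nu)=\dif\iota_\xi(f\,\dif\nu)$ do the rest. Your version is coordinate-free, makes the contact-geometric content explicit, and avoids invoking the existence of the adapted coframe from \cite{BaoChernShen}; the paper's version is more elementary in that it only needs the local normal form of $\dif\omega$ and a direct wedge count. Both correctly use the compact support of $f$ to justify Stokes on the boundaryless (possibly non-compact) manifold $SM$.
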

\begin{proof}
It is well-known \cite{BaoChernShen} that we can choose local coframe field
\[
	\omega^1,\omega^2,\cdots,\omega^{n-1},\omega^n=\omega, \omega^{n+1},\cdots,\omega^{2n-1},
\]
on $SM$,  such that $\dif\omega = \sum_{\alpha=1}^{n-1}\omega^{\alpha}\wedge\omega^{n+\alpha}$.  
Thus we have
\[
	(\dif\omega)^{n-1} = (n-1)!\;\omega^1\wedge\omega^{n+2}\wedge\cdots\wedge\omega^{n-1}\wedge\omega^{2n-1}.
\]

Let $e_1$, $\cdots$, $e_n$, $e_{n+1}$, $\cdots$, $e_{2n-1}$ be the dual frame field,  then $e_n = \xi$ 
(compare (\ref{eq:Reeb})).  If we write
\[
	\dif f = f_1\omega^1 + \cdots + f_n \omega^n + \cdots + f_{2n-1}\omega^{2n-1},
\]
then it is clear that $f_n = \xi(f) = \dot{f}$.  So we have
\[
	\dot{f}\;\omega\wedge(\dif\omega)^{n-1} = \dif f\wedge(\dif\omega)^{n-1} = \dif(f\wedge (\dif\omega)^{n-1}).
\]
In other words, the form to be integrated is exact.  By Stokes theorem,  the integration is zero since $M$ is boundariless.
\end{proof}

\subsection{Compact case}

We restate Theorem \ref{thm:main1} as follows.

\begin{theorem}
Let $(M,F)$ be an $n$-dimensional compact Finsler manifold and let $V$ be an affine vector field.  
If $\mathcal{T}(V)\leq 0$, then $V$ is a linearly parallel field.
\end{theorem}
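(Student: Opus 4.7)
The plan is to convert the conclusion into a Bochner-type integral identity on the unit sphere bundle $SM$. By Proposition~\ref{prp:charaff}, the affine hypothesis is equivalent to the Jacobi-type equation $\ddot V + R_y(V)/F^2 = 0$. Pairing with $V$ under $g_y$ and substituting into the definition of $\mathcal{T}(V)$ immediately gives
\[
	\mathcal{T}(V) = -\int_{SM} g_y(\ddot V, V)\, \dif\nu,
\]
so the goal reduces to rewriting the integrand as a non-negative quantity modulo an exact piece.

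To that end I will exploit the fact that the fundamental tensor is parallel along the spray, i.e. $g_{ij|0} = 0$, together with $F_{|0} = 0$. Consequently the operator $\dot{(\;\cdot\;)} = \xi(\;\cdot\;)$ obeys the Leibniz rule with respect to $g_y$, and a direct calculation yields
\[
	\dot{\bigl(g_y(\dot V, V)\bigr)} = g_y(\ddot V, V) + g_y(\dot V, \dot V).
\]
Substituting this identity produces
\[
	\mathcal{T}(V) = \int_{SM} g_y(\dot V, \dot V)\, \dif\nu - \int_{SM} \dot{\bigl(g_y(\dot V, V)\bigr)}\, \dif\nu.
\]
Since $M$, and therefore $SM$, is compact, Lemma~\ref{lem:key} applied to the smooth function $g_y(\dot V, V)$ on $SM$ shows the second integral vanishes. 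Combined with the hypothesis $\mathcal{T}(V) \leq 0$, the sandwich
\[
	0 \leq \int_{SM} g_y(\dot V, \dot V)\, \dif\nu = \mathcal{T}(V) \leq 0
\]
forces $g_y(\dot V, \dot V) \equiv 0$ on $SM$, and positive-definiteness of $g_y$ yields $\dot V = 0$, that is $V_{|0} = 0$. By the discussion closing Section~\ref{sec:pre}, this is exactly the statement that $V$ is linearly parallel.

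I do not foresee a serious obstacle beyond carefully verifying the Leibniz identity for $\dot{(\;\cdot\;)}$ against $g_y$; the main conceptual care needed is that $V$ must be regarded as a section of $\pi^*TM$ so that expressions like $\dot V$ and $g_y(\dot V, V)$ are well-defined smooth objects on $TM_0$. Once that framework is in place, the parallelism of $g$ under the dynamical derivative makes the calculation purely formal, and the whole argument avoids any Laplacian of $|V|^2$, which is the step that in the Riemannian treatment requires the additional work noted in the introduction.
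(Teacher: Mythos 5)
Your proof is correct and follows essentially the same route as the paper: both arguments integrate the dynamical derivative of $g_y(V,\dot V)$ over $SM$, use $g_{ij|0}=0$ and the Jacobi-type equation from Proposition~\ref{prp:charaff} to identify the integrand as $g_y(\dot V,\dot V)-\tfrac{1}{F^2}g_y(R_y(V),V)$, and invoke Lemma~\ref{lem:key} to conclude. The only difference is presentational (you phrase the conclusion as the identity $\int_{SM}g_y(\dot V,\dot V)\,\dif\nu=\mathcal{T}(V)$ rather than as an inequality), and your verification of the Leibniz rule is exactly the computation the paper carries out in coordinates.
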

\begin{proof}
We first do some computation on $TM_0$.  Let $f = \frac{1}{F} g_{ij}V^iV^j_{|0}$,  then we have
\[
	f_{|0} =\frac{1}{F}( g_{ij}V^i_{|0}V^j_{|0} + g_{ij} V^i V^j_{|0|0}) 
		= \frac{1}{F}(g_{ij}V^i_{|0}V^j_{|0} - g_{ij}V^iV^kR^j{}_k),
\]
where we have used the characterization (3) of affine vector field in Prop. \ref{prp:charaff}.
Since $f$ is $y$-homogeneous of order $0$,  it can be thought of as a function on $SM$.  The above equation
can be interpreted as
\[
	\dot f = g_y(\dot{V},\dot{V}) - \frac{1}{F^2} g_y(R_y(V), V). 
\]
Taking integral of both sides on $SM$ and using Lemma \ref{lem:key} yield
\[
	0 = \int_{SM} g_y(\dot{V},\dot{V}) \dif\nu - \int_{SM} \frac{1}{F^2}g_y(R_y(V),V)\dif\nu  \geq 0.
\]
Therefore, $\dot{V} = 0$, which means $V$ is a linearly parallel field.
\end{proof}

\subsection{Forward complete non-compact case}

In this section, we consider affine vector fields on forward complete non-compact Finsler
manifolds.  Again, we state a precise version of Theorem \ref{thm:main2} as follows.

\begin{theorem}
Let $(M,F)$ be an $n$-dimensional forward complete non-compact Finsler manifold.  Assume that
\begin{itemize}
\item[\textup{(1)}] $V$ is an affine vector field with finite global norm, i.e.,
\[
	\int_{SM} g_y(V,V)\;\dif\nu < +\infty;
\]
\item[\textup{(2)}] The total Ricci curvature is non-positive, i.e., $\mathcal{T}(V)\leq 0;$
\item[\textup{(3)}] The reversibility $\lambda(F):=\sup_{y\in SM}\frac{F(x,-y)}{F(x,y)} < +\infty$.
\end{itemize}
Then $V$ is a linearly parallel vector field.
\end{theorem}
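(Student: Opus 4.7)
My plan is to follow the same pattern as the proof of Theorem \ref{thm:main1}, but multiply by the square of a cutoff function before invoking Lemma \ref{lem:key}, so that the finite global norm assumption (1) kills the resulting boundary term. Retaining $f = \frac{1}{F}g_{ij}V^iV^j_{|0} = g_y(V,\dot V)$, the pointwise identity
\[
	\dot f = g_y(\dot V, \dot V) - \frac{1}{F^2}\,g_y(R_y(V), V)
\]
derived in the compact case is purely local and remains available. Fix a point $p \in M$. Using forward completeness, for each $r>0$ I construct a smooth, compactly supported cutoff $\psi_r : M \to [0,1]$ with $\psi_r \equiv 1$ on the forward ball $B^+_p(r)$, $\operatorname{supp}\psi_r \subset B^+_p(2r)$, and dual-norm bound $F^*(\dif\psi_r) \leq C/r$ uniformly in $r$, obtained by mollifying the forward distance $d^+(p,\cdot)$ and composing with a one-variable bump profile.

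Pulling $\psi_r$ back to $SM$, Lemma \ref{lem:key} applied to the compactly supported function $\psi_r^2 f$ gives
\[
	0 = \int_{SM} (\psi_r^2 f)^{\cdot}\, \dif\nu = \int_{SM} \psi_r^2 \dot f\, \dif\nu + 2\int_{SM} \psi_r \dot\psi_r f\, \dif\nu.
\]
Substituting the identity for $\dot f$ and applying the Young inequality $2|\psi_r \dot\psi_r g_y(V,\dot V)| \leq \tfrac{1}{2}\psi_r^2 g_y(\dot V, \dot V) + 2\dot\psi_r^2 g_y(V,V)$, then absorbing, I obtain
\[
	\tfrac{1}{2}\int_{SM} \psi_r^2\, g_y(\dot V, \dot V)\, \dif\nu \leq \int_{SM} \psi_r^2\, \frac{g_y(R_y(V), V)}{F^2}\, \dif\nu + 2\int_{SM} \dot\psi_r^2\, g_y(V, V)\, \dif\nu.
\]

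Since $\psi_r$ depends only on $x$, its dynamical derivative on $SM$ is $\dot\psi_r = \dif\psi_r(y)$, and bounded reversibility (assumption (3)) upgrades the one-sided bound on $F^*(\dif\psi_r)$ to the two-sided estimate $|\dot\psi_r| \leq \lambda(F)\, F^*(\dif\psi_r) \leq C\lambda(F)/r$ uniformly on $SM$. Thus the last integral is bounded by $(C\lambda(F))^2/r^2$ times the finite quantity $\int_{SM} g_y(V,V)\,\dif\nu$, and it tends to $0$ as $r \to \infty$. By monotone (or dominated) convergence together with assumption (2), the Ricci term on the right tends to $\mathcal{T}(V) \leq 0$, while the left side increases to $\tfrac{1}{2}\int_{SM} g_y(\dot V,\dot V)\,\dif\nu \geq 0$. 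Combining these, $\int_{SM} g_y(\dot V,\dot V)\,\dif\nu = 0$, so $\dot V \equiv 0$ and $V$ is linearly parallel.

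The main technical obstacle is producing the cutoff family $\psi_r$ with the required uniform dual-norm bound, since $d^+(p,\cdot)$ is only locally Lipschitz and asymmetric. Bounded reversibility enters twice: once to ensure that a smooth regularization of a forward-exhaustion function can be built with controlled $F^*$-gradient, and again to convert the one-sided bound $F^*(\dif\psi_r) = \sup_{F(y)=1} \dif\psi_r(y) \leq C/r$ into the two-sided pointwise bound $|\dif\psi_r(y)| \leq C\lambda(F)/r$ needed for $y \in SM$ pointing in any direction, not only in the direction of increasing forward distance.
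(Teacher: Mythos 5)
Your proposal is correct and follows essentially the same route as the paper: a cutoff supported on forward metric balls, the identity $\dot f = g_y(\dot V,\dot V) - g_y(R_y(V),V)/F^2$, absorption of the cross term $2\psi_r\dot\psi_r g_y(V,\dot V)$ via a Young-type inequality, the divergence lemma for compactly supported functions on $SM$, and reversibility to get the two-sided bound $-\lambda(F)\le \dot\rho\le 1$ on the derivative of the forward distance. The only (welcome) refinement is that you explicitly address smoothing the cutoff, whereas the paper composes a smooth profile directly with $d(p,\cdot)$ without commenting on its lack of smoothness.
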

\begin{proof}
Let $p$ be a fixed point of $M$.  For each point $x\in M$,  we denote
by $d(p,x)$ the forward geodesic distance from $p$ to $x$.  Let $\sigma : [0, +\infty) \to [0,1]$ 
be a smooth \emph{cut-off function} such that
\[
	\sigma(t) = \begin{cases} 
		1, & t \in [0,1],\\
		0, & t \in [2,+\infty).
	\end{cases}
\]
Fix a positive real number $\alpha$ and
let $\mu(x) = \sigma\big(\frac{d(p,x)}{\alpha}\big)$. Then we have the following

\textbf{Claim}.  There is a positive constant $A$ such that $\dot{\mu}^2 \leq \frac{A\cdot\lambda(F)^2}{\alpha^2}$.

\textbf{Proof of the claim}.  Set $\rho(x) = d(p,x)$.  We first show that $\dot\rho$ is bounded.  
Recall that the value of $\dot{\rho}$ at $(x,y)\in SM$ is given by
\begin{align*}
	\dot{\rho}(x,y) &= \left.\frac{\dif}{\dif t}\rho(\gamma(t))\right|_{t=0} 
		= \left.\frac{\dif}{\dif t}d(p, \gamma(t))\right|_{t=0}\\
	&= \lim_{\epsilon\to 0}\frac{1}{\epsilon}(d(p,\gamma(\epsilon))-d(p,\gamma(0)))
		\leq\lim_{\epsilon\to 0}\frac{1}{\epsilon}d(\gamma(0),\gamma(\epsilon))=F(x,y)=1.
\end{align*}
In a similar manner, we have
\begin{align*}
	\dot{\rho}(x,y) &= \lim_{\epsilon\to 0}\frac{1}{\epsilon}(d(p,\gamma(\epsilon))-d(p,\gamma(0)))\\
		&\geq -\lim_{\epsilon\to 0}\frac{1}{\epsilon}d(\gamma(\epsilon),\gamma(0)) = -F(x,-y)\geq -\lambda(F).
\end{align*}
So $\dot\mu = \sigma'(\rho/\alpha)\cdot \dot\rho/\alpha$ satisfies $\dot\mu^2 
\leq A\cdot\lambda(F)^2/\alpha^2$,  where $A$ is the upper bound of $|\sigma'(t)|^2$.  Thus the claim is proved.

Now, consider the function $f = \mu^2 g_y(V, \dot{V})$ on $SM$.  We have
\begin{equation}\label{eq:dotf}
	\dot f = 2\mu\dot{\mu} g_y(V, \dot{V}) + \mu^2 g_y(\dot{V},\dot{V}) + \mu^2 g_y(V,\ddot{V}).
\end{equation}
Expanding the inequality $\frac{1}{2}g_y(\mu\dot{V} + 2\dot{\mu} V, \mu\dot{V} + 2\dot{\mu} V) \geq 0$ yields
\[
	2\mu\dot{\mu}g_y(V, \dot{V}) \geq - 2\dot{\mu}^2 g_y(V,V) - \frac{1}{2}\mu^2 g_y(\dot{V},\dot{V}).
\]
Substituting this inequality into (\ref{eq:dotf}), we get
\begin{align*}
	\dot f &\geq -2\dot{\mu}^2 g_y(V, V) + \frac{1}{2}\mu^2 g_y(\dot{V},\dot{V}) + \mu^2 g_y(V,\ddot{V})\\
			&\geq -\frac{2A\cdot\lambda(F)^2}{\alpha^2} g_y(V,V) + \frac{1}{2}\mu^2 g_y(\dot{V},\dot{V}) 
			- \frac{\mu^2}{F^2} g_y(V,R_y(V)).
\end{align*}
Taking integral of both sides on $SM$ and using Lemma \ref{lem:key} yield
\begin{align*}
	0\geq -\frac{2A\cdot\lambda(F)^2}{\alpha^2}\int_{SM}g_y(V,V)\dif\nu 
		&+ \frac{1}{2}\int_{SM}\mu^2 g_y(\dot{V},\dot{V})\dif\nu \\
		&- \int_{SM}\frac{\mu^2}{F^2}g_y(V,R_y(V))\dif\nu.
\end{align*}
It follows that
\begin{align*}
	&\frac{2A\cdot\lambda(F)^2}{\alpha^2}\int_{SM}g_y(V,V)\dif\nu \\
	 \geq& \frac{1}{2}\int_{SM}\mu^2 g_y(\dot{V},\dot{V})\dif\nu
	 -\int_{SM}\frac{\mu^2}{F^2}g_y(V,R_y(V))\dif\nu.
\end{align*}
Letting $\alpha\to\infty$, then the left hand side approaches $0$ by the conditions (1) and (3),  while the right 
hand side approaches $\frac{1}{2}\int_{SM}g_y(\dot{V},\dot{V})\dif\nu -\mathcal{T}(V)\geq 0$.   Consequently,  we must
have $\dot{V}=0$.  So $V$ is linearly parallel.
\end{proof}

\section*{Acknowledgements}

The second author is supported by NFSC (no. 61573021) and the Fundamental Research Funds for the Central
Universities (no. 2017IA006).  The authors are happy to acknowledge the hospitality and great working conditions
provided by the Department of Mathematics, IUPUI.  They also want to thank Zhongmin Shen and Wei Zhao for useful
conversations.

\end{document}